\title[Reinforcement Learning of Markov Decision Processes with Peak Constraints]{Reinforcement Learning of Markov Decision Processes with Peak Constraints}
\newcommand{\one}{\mathbf{1}}
\newcommand{\R}{\mathbb{R}}
\newcommand{\Prob}{\mathbf{Pr}}
\newcommand{\E}{\mathbf{E}}
\DeclareMathOperator*{\argmax}{arg\,max}
\newtheorem{defi}{Definition}
\newtheorem{thm}{Theorem}
\newtheorem{prp}{Proposition}
\newtheorem{assump}{Assumption}
\author{%
 \Name{Ather Gattami} \Email{ather.gattami@ri.se}\\
 \addr RISE AI\\ Research Institutes of Sweden\\ Stockholm, Sweden
}
\begin{document}

\maketitle

\begin{abstract} 
	In this paper, we consider reinforcement learning of Markov Decision Processes (MDP) with peak constraints, where an agent chooses a policy to optimize an objective and at the same time satisfy additional peak constraints. The agent has to take actions based on the observed states, reward outputs, and constraint-outputs, without any knowledge about the dynamics, reward functions, and/or the knowledge of the constraint functions. We introduce a transformation of the original problem in order to apply reinforcement learning algorithms where the agent maximizes a bounded and unconstrained objective. We show that the policies obtained from the transformed problem are optimal whenever the original problem is feasible. Out solution is memory efficient and doesn't require to store the values of the constraint functions. To the best of our knowledge, this is the first time learning algorithms guarantee convergence to optimal stationary policies for the MDP problem with peak constraints for discounted and expected average rewards, respectively.
\end{abstract} 

\begin{keywords}%
  Markov Decision Process, Reinforcement Learning, Peak Constraints, Memory Efficient Learning 
\end{keywords}

\section{Introduction}
\subsection{Motivation}
Reinforcement learning is concerned with optimizing an objective function that depends on a given agent's action and the state of the process to be controlled. However, many applications in practice require that we take actions that are subject to additional constraints that need to be fulfilled. One example is wireless communication where the total transmission power of the connected wireless devices is to be minimized subject to constraints on the quality of service (QoS) such as maximum delay constraints \cite{djonin:2007}. Another example is the use of reinforcement learning methods to select treatments for future patients. To achieve just the right effect of a drug for specific patients, one needs to make sure that other important patient values satisfy some peak constraints that should not be violated in the short or long run for the safety of the patient.

Informally, the problem of reinforcement learning for Markov decision processes with peak constraints is described as follows (note that bandit optimization with peak constraints becomes a special case). Given a stochastic process with state $s_k$ at time step $k$, 
reward function $r$, and a discount factor $0<\gamma<1$, the constrained reinforcement learning problem is that for the optimizing agent to find a stationary policy $\pi(s_k)$, taking values in some finite set $A$, that minimizes the discounted reward 
\begin{equation}
\label{dis}
\sum_{k=0}^{\infty}\gamma^k \E\left(r(s_k, \pi(s_k))\right)
\end{equation}
or the expected average reward
\begin{equation}
\label{ave}
\lim_{T\rightarrow \infty} \frac{1}{T}\sum_{k=0}^{T-1} \E\left(r(s_k, \pi(s_k))\right) 
\end{equation}
subject to the constraints
\begin{equation}
\label{constr}
r^j(s_k, a_k) \ge 0, ~~~ a_k \in A, ~~~\textup{for all } k, ~~~  j=1, ..., J
\end{equation}
(a more formal definition of the problem is introduced in the next section).

The \textit{peak} constrained reinforcement learning problem of Markov decision processes is that of finding an optimal policy that satisfies a number of peak constraints of the form (\ref{constr}). The constraints (\ref{constr}) can be equivalently written as $a_k\in A(s_k)$, where the feasibility set $A(s_k)$ is state dependent and is given by the inequality constraints (\ref{constr}). 
If the functions $r_1, .., r_j$ where known, then the optimization problem is straightforward and standard $Q$-learning solves the problem in the case of unknown process and reward $r$. However, if we don't know the constraint functions $r_1, ..., r_J$, then we don't know the feasibility set $A(s_k)$, and hence we need to learn it. It's important to note that since the constraint functions are unknown, it's inevitable that we violate the constraints in the learning process. The agent may, however, measure the samples $r_1(s_k,a_k), ..., r_J(s_k,a_k)$. 
The goal of this paper is to provide an algorithm that asymptotically converges to a feasible and optimal solution. One way could be to wait until one passes through all pairs $(s,a)$ to observe $r_1(s,a), ..., r_J(s,a)$, learn them, and then construct the feasibility sets $A(s)$ and run $Q$-learning for optimizing. However, this imposes huge cost in terms of memory, as one must store $r_1(s,a), ..., r_J(s,a)$ for all $(s,a)$, which mounts to $S\times A\times J$ elements. Furthermore, not optimizing along the learning process could be costly.

The following example from wireless communication describes in more detail a model where we have a Markov decision process with hard (peak) constraints and where the agent doesn't have knowledge of the process and reward functions, but only observations of the state $s$ and reward samples $r(s,a), r_1(s,a), ..., r_J(s,a)$. 
\begin{example}[Wireless communication]
	Consider the problem of wireless communication were the goal is to minimize the average of the transmitted
	power subject to a strict quality of service (QoS) constraint. Let $s_k$ denote the channel state at time step $k$ which belongs to a finite set and let the $a_k$ be the bandwidth allocation action, also belonging to a finite set of actions. The power required to occupy a bandwidth $a$ given the channel state is $P(s,a)$, which is unknown to the agent. The power affects the channel state, and hence, the channel evolves according to a probability distribution given by $p(s_{k+1}\mid s_k, a_k)$ which is also unknown. The QoS is given by a lower bound $b$ on the bit error rate, given by $q(s_k, a_k)\ge b$. The function $q(s_k, a_k)$ is not known as it is affected by noise that is not accessible to the agent. 
	By introducing $r(s,a) = -P(s,a)$ and $r^1(s,a) = q(s,a)-b$, the task is to solve the following optimization problem
	\[
	\begin{aligned}
	\sup_{a_k} ~& \lim_{N\rightarrow \infty} \frac{1}{N}\sum_{k=0}^{N-1} \E(r(s_k, a_k) ) \\
	\textup{s. t.}~ &~  r^1(s_k, a_k) \ge 0
	\end{aligned}
	\] 
\end{example}
\begin{example}[Search Engine]
	In a search engine, there is a number of documents that are related to a certain query. There are two values that are related to every document, the first being a (advertisement) value $u_i$ of document $i$ for the search engine and the second being a value $v_i$ for the user (could be a measure of how strongly related the document is to the user query). The task of the search engine is to display the documents in a row some order, where each row has an attention value, $A_j$ for row $j$. We assume that $u_i$ and $v_i$ are known to the search engine for all $i$, whereas the attention values $\{A_j\}$ are not known.
	The action of the search engine is to display document $i$ in position $j$, $a_i = j$. Thus, the expected average reward for the search engine is
	$$
	R = \lim_{N\rightarrow \infty} \frac{1}{N}\sum_{i=1}^N \E(u_i A_{a(i)}) 
	$$
	The search engine has multiple objectives here where it wants to maximize the rewards for the user and itself. One solution is to define a measure for the quality of service, $v_i A_{a(i)}\ge q$ and then maximize its reward subject to the quality of service constraint, that is
	\[
	\begin{aligned}
	\sup_{a_i} ~& \lim_{N\rightarrow \infty} \frac{1}{N}\sum_{i=1}^N \E(u_i A_{a(i)}) \\
	\textup{s. t.}~ &  
	v_i A_{a(i)} - q 
	\ge 0
	\end{aligned}
	\]
	
\end{example}
Although constrained Markov decision process problems are fundamental and have been studied extensively in the literature (see \cite{Altman99constrainedmarkov} and the references therein), the reinforcement learning counter part  of \textit{finding the optimal policies} seem to be still open, and even less is known for the case of peak constraints considered in this paper. When an agent has to take actions based on the observed states, rewards outputs, and constraint-outputs solely (without any knowledge about the dynamics, reward functions, and/or the knowledge of the constraint functions), a general solution seem to be lacking to the best of the authors' knowledge.

\subsection{Previous Work}
Most of the work on Markov decision processes and bandit optimization considers constraints in the form of discounted or expected average rewards \cite{Altman99constrainedmarkov}. 
Constrained MDP problems are convex and hence one can convert the constrained MDP problem to an unconstrained zero-sum game where the objective is the Lagrangian of the optimization problem \cite{Altman99constrainedmarkov}.  However, when the dynamics and rewards are not known, it doesn't become apparent how to do it as the Lagrangian will itself become unknown to the optimizing agent. 
Previous work regarding constrained MDP:s, when the dynamics of the stochastic process are not known, considers scalarization through weighted sums of the rewards, see  \cite{Roijers:2013:SMS:2591248.2591251} and the references therein. 
Another approach is to consider Pareto optimality when multiple objectives are present \cite{JMLR:v15:vanmoffaert14a}.
However, none of the aforementioned approaches guarantee to satisfy lower bounds for a given set of reward functions simultaneously. In \cite{Gabor98multi-criteriareinforcement}, a multi-criteria problem is considered where the search is over deterministic policies. In general, however, deterministic policies are not optimal \cite{Altman99constrainedmarkov}. Also, the multi-criteria approach in 
\cite{Gabor98multi-criteriareinforcement} may provide a deterministic solution to a multi-objective problem in the case of two objectives and it's not clear how to generalize to a number of objectives larger than two.
In \cite{geibel:2006}, the author considers a single constraint and allowing for randomized policies. However,  no proofs of convergence are provided for the proposed sub-optimal algorithms. Sub-optimal solutions with convergence guarantees are provided in \cite{chow:2017} for the single constraint problem, allowing for randomized polices. In \cite{borkar:2005}, an actor-critic sub-optimal algorithm is provided for one single constraints and it's claimed that it can generalized to an arbitrary number of constraints. 
Sub-optimal solutions to constrained reinforcement learning problems with expected average rewards in a wireless communications context were considered in \cite{djonin:2007}. Sub-optimal reinforcement learning algorithms were presented in \cite{lizotte:2010} for controlled trial analysis with multiple rewards, again by considering a scalarization approach. In \cite{drugan:2013}, multi-objective bandit algorithms were studied by considering scalarization functions and Pareto partial orders, respectively, and present regret bounds. As previous results, the approach in \cite{drugan:2013} doesn't guarantee to satisfy the constraints that correspond to the multiple objectives. In \cite{achiam:2017}, constrained policy optimization is studied for the continuous MDP problem and some heurestic algorithms were suggested.

\subsection{Contributions}
We consider the problem of optimization and learning for Markov decision processes with peak constraints given by (\ref{constr}), for both discounted and expected average rewards, respectively. We reformulate the optimization problem with peak constraints to a zero-sum game where the opponent acts on a \textit{finite} set of actions (and not on a continuous space of actions). This transformation is essential in order to achieve a tractable optimal algorithm. The reason is that using Lagrange duality without model knowledge requires infinite dimensional optimization since the Lagrange multipliers are continuous (compare to the intractability of a partially observable MDP, where the beliefs are continuous variables). Furthermore, the Lagrange multipliers imply that the reward function is unbounded. We introduce a reformulation of the problem that is proved to be equivalent, where we get bounded reward functions and thereafter apply reinforcement learning algorithms that converge to optimal policies for the Markov decision process problem with peak constraints. The algorithm works without imposing a cost in terms of memory, where there is no need to store the learned values of $r_1(s,a), ..., r_J(s,a)$ for all $(s,a)$, saving memory of size $S\times A\times J$. 
We give complete proofs for both cases of discounted and expected average rewards, respectively.

\subsection{Notation}
\begin{tabular}{ll}
	$\mathbb{N}$ & The set of nonnegative integers.\\
	$[J]$ & The set of integers $\{1, ..., J\}$.\\
	$\mathbb{R}$ & The set of real numbers.\\
	$\mathbf{E}$ & The expectation operator.\\
	$\mathbf{Pr}$ &  $\mathbf{Pr}(x\mid y)$ denotes the probability of the\\
	&  stochastic variable $x$ given $y$.\\
	$\argmax$ 		& $\pi^\star = \argmax_{\pi\in \Pi} f_\pi$ denotes an 			
	element\\
	&  $\pi^\star \in 	\Pi$ that maximizes the function $f_\pi$.\\
	$\ge$ 				& For $\lambda = (\lambda^1, ..., \lambda^{J})$, 
	$\lambda \ge 0$ denotes\\ 
	& that $\lambda^i\ge 0$ for $i=1, ..., J$.	\\
	$1_{\{X\}}(x)$ & $1_{\{X\}}(x) = 1$ if $x\in X$ and\\ 
	& $1_{\{X\}}(x) = 0$ if $x\notin X$.\\
	$\mathbf{1}_n$ & $\mathbf{1}_{n} = (1, 1, ..., 1)\in \R^n$. \\
	$N(t, s, a)$ & $N(t, s, a) = \sum_{k=1}^t 1_{\{s, a\}}(s_k, a_k)$. \\
	e & $\textup{e}: (s,a) \mapsto 1$.\\
	$|S|$ & Denotes the number of elements in $S$.\\
	$s_+$        & For a state $s = s_k$, we have $s_+ = s_{k+1}$.\\
\end{tabular}

\section{Problem Formulation}
\label{probform}
Consider a Markov Decision Process (MDP) defined by the tuple $(S, A, P)$, where  $S = \{S_1, S_2, ..., S_n\}$ is a finite set of states, $A = \{A_1, A_2, ..., A_m\}$ is a finite set of actions taken by the agent, and $P:S\times A \times S \rightarrow [0,1]$ is a transition function mapping each triple $(s, a, s_+)$ to a probability given by
$$P(s, a, s_+)=\mathbf{Pr}(s_+ \mid s, a)$$
and hence,
$$
\sum_{s_+\in S} P(s, a, s_+) = 1, ~~~ \forall (s,a)\in S\times A
$$
Let $\Pi$ be the set of policies that map 
a state $s\in S$ to a probability distribution of the actions with a probability assigned to each action $a\in A$, that is $\pi(s) = a$ with probability $\Prob(a\mid s)$.
The agent's objective is to find a stationary policy $\pi\in \Pi$ that maximizes the expected value of the total the discounted reward 
\begin{equation}
\label{discounted}
\sum_{k=0}^{\infty}\gamma^k \E\left(r(s_k, \pi(s_k))\right)
\end{equation}
or the expected average reward
\begin{equation}
\label{average}
\lim_{T\rightarrow \infty} \frac{1}{T}\sum_{k=0}^{T-1} \E\left(r(s_k, \pi(s_k))\right) 
\end{equation}
for $s_0 = s\in S$, where $r:S\times A \rightarrow \R$ is some unknown reward function. The parameter $\gamma \in (0,1)$ is a discount factor which models how much weight to put on future rewards. The expectation is taken with respect to the randomness introduced by the policy $\pi$ and the transition mapping $P$. 

The (hard or peak) constrained  reinforcement learning problem is concerned with finding a policy that satisfies a set of peak constraints given (\ref{constr}), 
where $r^j:S\times A \rightarrow \R$ are bounded functions, for $j=1, ..., J$. The agent doesn't have knowledge of the process and reward functions, but only measurements of the state $s$ and reward samples $r(s_k,a_k), r_1(s_k,a_k), ..., r_J(s_k,a_k)$. However, the agent knows that the reward functions are bounded by some constant $c$.

\begin{assump}
	\label{r}
	The absolute values of the reward functions $r$ and $\{r^j\}_{j=1}^{J}$ are bounded by some constant $c$ known to the agent. 
\end{assump}

\begin{assump}
	\label{rp}
	$r(s,a) > 0$ for all $(s, a)\in S\times A$.
\end{assump}
Note that Assumption \ref{rp} is not restrictive since we can replace $r$ with $r+c+\epsilon$ for some positive real number $\epsilon>0$ and obtain an equivalent problem with positive reward $r$ in the objective function and it will remain bounded by the constant $2c+\epsilon$.


\section{Reinforcement Learning for Markov Decision Processes}
\label{RL}

\subsection{Discounted Rewards}
Consider a Markov Decision Process where the agent is maximizing the total discounted reward given by
\begin{equation}
\label{minreward}
V(s_0) = \sum_{k=0}^{\infty}\gamma^k \E\left(R(s_k, a_k)\right) 
\end{equation} 
for some initial state $s_0\in S$. Let $Q^\star(s,a)$ be the expected reward of the agent taking action $a\in A$ from state $s\in S$, and continuing with an optimal policy thereafter.

Then for any stationary policy $\pi$, we have that
\begin{equation}
\label{Q} 
\begin{aligned}
Q^\star(s,a) 	&= R(s, a) + \max_{\pi\in \Pi}
\sum_{k=1}^{\infty}\gamma^k \E\left(R(s_k, \pi(s_k))\right)\\
& = R(s, a) + \gamma \cdot  \max_{\pi\in \Pi}
\E\left(Q^\star(s_+, \pi(s_+))\right)\\
\end{aligned}
\end{equation} 
Equation (\ref{Q}) is known as the Bellman equation,  
and the solution to (\ref{Q}) with respect to $Q^\star$ that corresponds to the optimal policy $\pi^\star$ and optimal actions of the opponent is denoted $Q^\star$.
If we have the function $Q^\star$, then we can obtain the optimal policy $\pi^\star$ according to the equation
\begin{equation}
\label{pistar0}
\begin{aligned}
\pi^\star(s) &= \argmax_{\pi\in \Pi} \E\left(Q^\star(s, \pi(s))\right) \\
\end{aligned}
\end{equation}
which maximizes the total discounted reward 
\begin{equation*}
\begin{aligned}
\sum_{k=0}^{\infty}\gamma^k \E\left(R(s_k, \pi^\star(s_k))\right) = 
\E\left(Q^\star(s, \pi^\star(s))\right) 
\end{aligned}
\end{equation*} 
for $s=s_0$. Note that the optimal policy may not be deterministic, as opposed to reinforcement learning for unconstrained Markov Decision Processes, where there is always an optimal policy that is deterministic. 

In the case we don't know the process $P$ and the reward function $R$, we will not be able to take advantage of the Bellman equation directly. The following results show that we will be able to design an algorithm that always converges to $Q^\star$. 

\begin{defi}[Unichain MDP]
	An MDP is called unichain, if for each policy $\pi$ the Markov chain
	induced by $\pi$ is ergodic, i.e. each state is reachable from any other state.
\end{defi}
Unichain MDP:s are usually considered in reinforcement learning problems with discounted rewards,  since they guarantee that we learn the process dynamics from the initial states. Thus, for the discounted reward case we will make the following assumption. 
\begin{assump}[Unichain MDP]
	\label{unichain}
	The MDP $(S, A, P)$ is assumed to be unichain.
\end{assump}

\begin{prp}
	\label{algo}
	Consider a Markov Decision Process given by the tuple $(S, A, P)$, where $(S, A, P)$ is unichain, and suppose that $R$ is bounded. 
	Let $Q=Q^\star$ and $\pi = \pi^\star$ be solutions to the Bellman equation
	\begin{equation}
	\label{bellman}
	\begin{aligned}
	Q(s,a) 	&= R(s, a) + \gamma \cdot \max_{\pi\in \Pi}
	\E\left(Q(s_+, \pi(s))\right)\\
	\pi(s) &= \argmax_{\pi\in \Pi}\E\left(Q(s, \pi(s))\right)
	\end{aligned}
	\end{equation}		
	Let $\alpha_k(s, a) = \alpha_k \cdot \one_{\{s,a\}}(s_k,a_k)$ satisfy
	\begin{equation}
	\label{alpha}
	\begin{aligned}
	&0\le \alpha_k(s, a) < 1\\
	&\sum_{k=0}^\infty \alpha_k(s,a) = \infty\\
	& \sum_{k=0}^\infty \alpha_k^2(s,a) < \infty\\
	& \forall (s,a) \in S\times A
	\end{aligned}
	\end{equation}
	Then, the update rule
	\begin{equation}
	\label{q-learning}
	\begin{aligned}
	Q_{k+1} (s,a) &= (1-\alpha_k(s,a))Q_k(s,a)+\alpha_k(s,a)(R(s,a)+\gamma  \max_{a\in A}  Q_{k}(s_+,a))
	\end{aligned}
	\end{equation}
	converges to $Q^\star$ with probability 1. 	Furthermore, the optimal policy $\pi^\star \in \Pi$ given by
	(\ref{pistar0}) maximizes
	(\ref{minreward}).
\end{prp}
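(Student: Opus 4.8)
The plan is to recognize the update rule (\ref{q-learning}) as an asynchronous stochastic approximation scheme whose target is the fixed point of the Bellman optimality operator, and then to combine a sup-norm contraction argument with a standard stochastic approximation convergence lemma. First I would define the operator $H$ on bounded functions $Q:S\times A\to\R$ by
\[
(HQ)(s,a) = R(s,a) + \gamma \sum_{s_+\in S} P(s,a,s_+)\max_{a'\in A} Q(s_+,a'),
\]
and verify that $H$ is a $\gamma$-contraction in the sup-norm $\|Q\|_\infty = \max_{(s,a)}|Q(s,a)|$. Since $R$ is bounded, $H$ maps the complete space of bounded functions into itself, so by Banach's fixed point theorem it has a unique fixed point, which is exactly the $Q^\star$ appearing in (\ref{bellman}); note $\|Q^\star\|_\infty \le c/(1-\gamma)$.

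Next I would rewrite (\ref{q-learning}) in canonical stochastic-approximation form,
\[
Q_{k+1}(s,a) = (1-\alpha_k(s,a))Q_k(s,a) + \alpha_k(s,a)\bigl((HQ_k)(s,a) + w_k(s,a)\bigr),
\]
where, when $(s_k,a_k)=(s,a)$, the noise is $w_k(s,a) = \gamma\max_{a'}Q_k(s_+,a') - \gamma\sum_{s'}P(s,a,s')\max_{a'}Q_k(s',a')$, and $w_k(s,a)=0$ otherwise. Conditioned on the history $\mathcal{F}_k$ up to time $k$, $w_k$ has zero mean, and its conditional second moment is bounded by $C_1 + C_2\|Q_k\|_\infty^2$ for constants depending only on $\gamma$ and $c$. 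The step-size conditions (\ref{alpha}) are the Robbins--Monro conditions, and Assumption \ref{unichain} guarantees that every pair $(s,a)$ is visited infinitely often along the trajectory, so that $\sum_k \alpha_k(s,a)=\infty$ is actually realized for each $(s,a)$ rather than being vacuous. Invoking the standard asynchronous stochastic-approximation theorem for sup-norm contractions (as in Jaakkola--Jordan--Singh or Bertsekas--Tsitsiklis), these ingredients yield $Q_k\to Q^\star$ with probability $1$.

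Finally, once $Q_k\to Q^\star$, I would argue that the greedy policy $\pi^\star$ defined by (\ref{pistar0}) attains the optimum in (\ref{minreward}). Substituting $Q^\star$ into (\ref{bellman}) shows that $V^{\pi^\star}(s) := \E\left(Q^\star(s,\pi^\star(s))\right)$ satisfies the Bellman equation for the value function; by the verification (uniqueness) theorem of discounted dynamic programming, any function satisfying that equation equals $\max_{\pi}V^\pi(s)$, so $\pi^\star$ is optimal and $V^{\pi^\star}(s)=\E\left(Q^\star(s,\pi^\star(s))\right)$ is the optimal value.

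I expect the main obstacle to be the stochastic-approximation step rather than the contraction step. One must either establish a priori that the iterates $\{Q_k\}$ remain bounded --- for instance by an induction/stopping-time argument showing $\|Q_k\|_\infty$ stays below $c/(1-\gamma)$ up to vanishing corrections --- or appeal to a version of the convergence lemma that derives boundedness from the contraction property together with the $C_1+C_2\|Q_k\|_\infty^2$ noise bound. Handling the asynchronous, per-step update of a single coordinate $(s,a)$, as opposed to a synchronous sweep over all of $S\times A$, is the technically delicate part, and is precisely where Assumption \ref{unichain} enters the argument.
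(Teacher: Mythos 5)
Your proposal is correct and follows essentially the same route as the paper, which proves this proposition simply by citing Jaakkola--Jordan--Singh (1994): the sup-norm contraction of the Bellman operator combined with the asynchronous stochastic-approximation convergence theorem is exactly the argument of that reference. Your write-up merely spells out the details the paper delegates to the citation.
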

\begin{proof}
	Consult \cite{jaakkola:1994}.
\end{proof}

\subsection{Expected Average Rewards}       
The agent's objective here is to maximize the total reward given by
\begin{equation}
\label{reward2}
\lim_{T\rightarrow \infty} \frac{1}{T}\sum_{k=0}^{T-1}  \E\left(R(s_k, a_k)\right) 
\end{equation} 
for some initial state $s_0\in S$.

We will make a simple assumption regarding the existence of recurring state, a standard assumption in Markov decision process problems with expected average rewards to ensure that the expected value of the reward is independent of the initial state.

\begin{assump}
	\label{recurrentstate}
	There exists a state $s^* \in S$ which is recurrent for every stationary policy $\pi$ played by the agent.
\end{assump}
Assumption \ref{recurrentstate} implies that $\E(r^j(s_k, a_k))$ is independent of the initial state at stationarity. 

\begin{prp}
	\label{stateindependent}
	If Assumption \ref{recurrentstate} holds, then the value of the Markov Decision Process $(S, A, P)$, with finite state and action spaces, is independent of the initial state. 
\end{prp}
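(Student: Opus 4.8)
The plan is to prove the sharper fact that for \emph{every} stationary policy $\pi\in\Pi$ the long-run average reward
$$
g_\pi(s)\;:=\;\lim_{T\to\infty}\frac1T\sum_{k=0}^{T-1}\E\big(R(s_k,a_k)\mid s_0=s\big)
$$
is the same for all $s\in S$; since the value of the MDP at an initial state is $\sup_{\pi\in\Pi}g_\pi(s)$, the proposition is then immediate. First I would record two standard facts about the finite Markov chain induced by a fixed $\pi$: (i) the limit above exists, because the Ces\`aro averages of the bounded numbers $\E(R(s_k,a_k))$ converge; and (ii) by the ergodic theorem for finite Markov chains, the \emph{pathwise} average $\frac1T\sum_{k=0}^{T-1}R(s_k,a_k)$ converges almost surely, and when the chain is started from a state in the recurrent class of $s^*$ this a.s.\ limit is the constant $g_\pi(s^*)$.

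Now fix $\pi$ and an arbitrary $s\in S$ and let $\tau=\inf\{k\ge0:\ s_k=s^*\}$ be the first visit to $s^*$ (with $\tau=0$ if $s=s^*$). Because $S$ is finite and, by Assumption \ref{recurrentstate}, $s^*$ is recurrent under $\pi$ and reachable from $s$, the hitting time satisfies $\E\tau<\infty$; in particular $\tau<\infty$ a.s. By the strong Markov property at $\tau$, the shifted process $(s_{\tau+j},a_{\tau+j})_{j\ge0}$ has the law of a process $(\hat s_j,\hat a_j)_{j\ge0}$ run under $\pi$ from $\hat s_0=s^*$. Splitting the reward sum at $\tau$ and using $|R|\le c$ (Assumption \ref{r}),
$$
\sum_{k=0}^{T-1}\E\big(R(s_k,a_k)\mid s_0=s\big)
=\E\Big(\textstyle\sum_{k=0}^{\tau\wedge T-1}R(s_k,a_k)\Big)
+\E\Big(\one_{\{\tau<T\}}\textstyle\sum_{j=0}^{T-1-\tau}R(\hat s_j,\hat a_j)\Big),
$$
where the first expectation has modulus at most $c\,\E\tau$.

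Dividing by $T$ and sending $T\to\infty$, the first term vanishes, and in the second term $\one_{\{\tau<T\}}\to1$ a.s.\ while $\frac1T\sum_{j=0}^{T-1-\tau}R(\hat s_j,\hat a_j)=\frac{T-\tau}{T}\cdot\frac1{T-\tau}\sum_{j=0}^{T-1-\tau}R(\hat s_j,\hat a_j)\to g_\pi(s^*)$ a.s.\ by fact (ii); the integrands are bounded by $c$, so the bounded convergence theorem lets us pass the limit through $\E$. Hence $g_\pi(s)=g_\pi(s^*)$ for every $s$ and every $\pi$, which gives the proposition.

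The main obstacle is not conceptual but is the bookkeeping around the random time $\tau$: setting up the coupling of the $s$-started and $s^*$-started chains cleanly, handling the truncation at $T$, and justifying the interchange of $\lim_T$ with $\E$ — exactly the places where the uniform bound $|R|\le c$ and $\E\tau<\infty$ do the work. One step worth spelling out explicitly is the claim $\E\tau<\infty$: recurrence of $s^*$ together with $|S|<\infty$ yields positive recurrence, and one additionally needs $s^*$ to be reachable from every state under every stationary policy, which is the intended reading of Assumption \ref{recurrentstate}.
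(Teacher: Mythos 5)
Your proof is correct in substance, but note that the paper does not actually prove Proposition \ref{stateindependent} at all: its ``proof'' is a one-line pointer to \cite{bertsekas:2005}, where this is a standard result for average-reward MDPs with a state that is accessible under every stationary policy. What you have written is essentially a self-contained rendition of that textbook argument: fix $\pi$, split the reward sum at the first hitting time $\tau$ of $s^*$, use $|R|\le c$ and $\E\tau<\infty$ to kill the pre-$\tau$ contribution, and use the strong Markov property plus the pathwise ergodic theorem (applied to the finite state--action chain) and bounded convergence to identify the post-$\tau$ contribution with $g_\pi(s^*)$; taking $\sup_\pi$ then gives the proposition. Two small points. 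First, your ``fact (i)'' is justified by a false general principle --- Ces\`aro averages of a bounded sequence need not converge --- but it is also unnecessary: existence of the limit for every initial state falls out of your own hitting-time argument (or, if you prefer, from Ces\`aro convergence of $P^k$ for a finite stochastic matrix). Second, you are right to flag that the literal wording of Assumption \ref{recurrentstate} (mere existence of a recurrent state $s^*$ under every policy) does not by itself give $\Prob(\tau<\infty)=1$ or $\E\tau<\infty$ from an arbitrary start: one needs $s^*$ to be accessible from every state under every stationary policy, which forces $s^*$ to lie in the unique recurrent class and yields a geometric tail for $\tau$. That accessibility reading is exactly the standard hypothesis in \cite{bertsekas:2005} and is clearly what the paper intends, so your caveat is well placed rather than a defect; spelling out the geometric-tail bound for $\E\tau$ would make the write-up complete.
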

\begin{proof}
	Consult \cite{bertsekas:2005}.
\end{proof}

\begin{prp}
	Under Assumption \ref{recurrentstate}, there exists a number $v$ and a vector $(h(S_1), ..., h(S_n))\in \mathbb{R}^{n}$, such that for each $s\in S$, we have that
	\begin{equation}
	\label{H}
	\begin{aligned}
	h(s)  + v &= \mathbf{T}h(s)
	= \max_{\pi\in\Pi} \E\Big(R(s, \pi(s)) + 
	\sum_{s_+\in S} P(s_+\mid s, \pi(s))h(s_+)\Big)
	\end{aligned}
	\end{equation} 
	Furthermore, the value of (\ref{reward2}) is equal to $v$.
\end{prp}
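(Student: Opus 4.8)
The plan is to use the \emph{vanishing-discount} (Abelian/Tauberian) method, leveraging the discounted theory already available in Proposition~\ref{algo}. For each $\gamma\in(0,1)$ let $V_\gamma(s)$ denote the optimal discounted value from $s$, i.e.\ the solution of the discounted Bellman equation $V_\gamma(s)=\max_{\pi\in\Pi}\E\big(R(s,\pi(s))+\gamma\sum_{s_+\in S}P(s_+\mid s,\pi(s))V_\gamma(s_+)\big)$, and define the \emph{relative value function} $h_\gamma(s):=V_\gamma(s)-V_\gamma(s^*)$, where $s^*$ is the recurrent state from Assumption~\ref{recurrentstate}. The idea is to show that $(1-\gamma)V_\gamma(\cdot)$ converges to a constant $v$ and that $\{h_\gamma\}$ stays bounded as $\gamma\uparrow 1$, so that along a subsequence $\gamma_n\uparrow 1$ we have $h_{\gamma_n}\to h$; passing to the limit in the rescaled discounted Bellman equation then produces a pair $(v,h)$ satisfying (\ref{H}).

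Two estimates are needed. First, $\lim_{\gamma\uparrow1}(1-\gamma)V_\gamma(s)=v$ exists and is independent of $s$: by Proposition~\ref{stateindependent} the average value is initial-state independent, and a standard Tauberian argument (together with $|R|\le c$, which makes $(1-\gamma)V_\gamma$ uniformly bounded) identifies the common limit with the average-reward value. Second, $|h_\gamma(s)|$ is bounded uniformly in $\gamma$: since $S$ and $A$ are finite there are only finitely many deterministic stationary policies, and Assumption~\ref{recurrentstate} guarantees that under each of them $s^*$ is reached from any state in finite expected time; bounding $V_\gamma(s)-V_\gamma(s^*)$ by the discounted cost accumulated, under the optimal $\gamma$-policy, until the first visit to $s^*$ --- which is at most $c$ times a maximal expected hitting time $\bar\tau<\infty$ --- gives $|h_\gamma(s)|\le c\bar\tau$ for all $\gamma$ and $s$. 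By Bolzano--Weierstrass pick $\gamma_n\uparrow1$ with $h_{\gamma_n}\to h\in\R^n$.

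Now rewrite the discounted Bellman equation after subtracting $V_\gamma(s^*)$ from both sides:
\begin{equation*}
(1-\gamma)V_\gamma(s^*)+h_\gamma(s)=\max_{\pi\in\Pi}\E\Big(R(s,\pi(s))+\gamma\sum_{s_+\in S}P(s_+\mid s,\pi(s))h_\gamma(s_+)\Big).
\end{equation*}
The maximum over $\Pi$ is attained at a deterministic policy, so the right-hand side is a maximum of finitely many affine functions of $h_\gamma$ and is therefore continuous in $h_\gamma$; letting $\gamma=\gamma_n\uparrow1$ and using the first estimate gives $v+h(s)=\mathbf{T}h(s)$, which is (\ref{H}).

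It remains to verify that $v$ is the value of (\ref{reward2}). Let $\pi^\star$ attain the maximum in (\ref{H}). Telescoping (\ref{H}) along a trajectory generated by $\pi^\star$ from $s_0$ gives $\sum_{k=0}^{T-1}\E\big(R(s_k,\pi^\star(s_k))\big)=Tv+h(s_0)-\E\big(h(s_T)\big)$; dividing by $T$ and using $|h|\le c\bar\tau$ shows the average reward of $\pi^\star$ equals $v$. Conversely, for \emph{any} $\pi\in\Pi$, (\ref{H}) gives $\E(R(s_k,\pi(s_k)))\le v+\E(h(s_k))-\E(h(s_{k+1}))$ for every $k$, and summing over $k$ and dividing by $T$ yields average reward $\le v$. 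Hence $v$ is the optimal average reward, completing the proof. The main obstacle is the uniform-in-$\gamma$ boundedness of the relative value functions $h_\gamma$; everything else is either a Tauberian limit argument or a routine telescoping/verification computation.
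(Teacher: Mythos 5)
The paper gives no proof of this proposition at all --- it defers to \cite{bertsekas:2005} --- and the classical proof found there is precisely the vanishing-discount argument you reproduce, so your route coincides with the paper's intended one and its structure (Tauberian limit of $(1-\gamma)V_\gamma$, uniform boundedness of the relative values $h_\gamma$, subsequential limit in the rescaled Bellman equation, telescoping verification of $v$) is the right one. Two points deserve care, though. First, your key estimate rests on the claim that Assumption \ref{recurrentstate} yields a finite expected hitting time of $s^*$ from \emph{every} state under every stationary policy. Literally read, recurrence of $s^*$ only concerns returns starting from $s^*$ and does not exclude a policy whose induced chain has a second recurrent class from which $s^*$ is unreachable (e.g.\ $s^*$ and some other state both absorbing), in which case the hitting time is infinite and in fact a constant-$v$ solution of (\ref{H}) can fail to exist (take an absorbing $s^*$ with reward $0$ and another state where one action stays with reward $1$). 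So your step is valid only under the standard unichain/accessibility reading of the assumption --- the same reading the paper itself needs for Proposition \ref{stateindependent} --- and you should say so explicitly. Second, a small quantitative slip: decomposing at the first visit $\tau$ to $s^*$ under the $\gamma$-optimal policy gives $V_\gamma(s)-V_\gamma(s^*)=\E\big(\sum_{k=0}^{\tau-1}\gamma^k R(s_k,a_k)\big)-\big(1-\E(\gamma^{\tau})\big)V_\gamma(s^*)$, and the second term must also be controlled, via $1-\gamma^{\tau}\le(1-\gamma)\tau$ and $|V_\gamma(s^*)|\le c/(1-\gamma)$; the uniform bound is then $2c\bar\tau$ rather than $c\bar\tau$, which changes nothing downstream.
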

\begin{proof}
	Consult \cite{bertsekas:2005}.
\end{proof}
Similar to $Q$-learning (but still different), our goal is to find a function
$Q^\star(s,a)$ that satisfies
\begin{equation}
\label{Qreward2}
\begin{aligned}
Q^\star(s,a) + v^\star = & R(s, a) +  \sum_{s_+\in S} P(s_+\mid s, a)h^\star(s_+)
\end{aligned}
\end{equation} 
for any solutions $h^\star$ and $v^\star$ to Equations (\ref{H})-(\ref{Qreward2}). Note that
we have 
$$
h^\star(s) = \max_{\pi\in \Pi} \E(Q^\star(s,\pi(s)))
$$
In the case we don't know the process $P$ and the reward function $R$, we will not be able to take advantage of (\ref{Qreward2}) directly. The next proposition shows that we will be able to design an algorithm that always converges to $Q^\star$. 
It's worth to note here that the operator $\mathbf{T}$ in Equation (\ref{H}) is not a contraction, so the standard $Q$-learning that is commonly used for reinforcement learning in Markov decision processes with discounted rewards can't be applied here.

\begin{assump}[Learning rate]
	\label{lr}
	The sequence $\beta(k)$ satisfies:
	\begin{enumerate}
		\item For every $0<x<1$, $\sup_k \beta(\lfloor xk \rfloor)/ \beta(k)<\infty$
		\item $\sum_{k=1}^{\infty} \beta(k) = \infty$ and $\sum_{k=0}^{\infty} \beta(k)^2 < \infty$.
		\item For every $0<x<1$, the fraction
		$$
		\frac{\sum_{k=1}^{\lfloor yt\rfloor} \beta(k)}{\sum_{k=1}^t \beta(k)} 
		$$
		converges to 1 uniformly in $y\in [x, 1]$ as $t\rightarrow \infty$.
	\end{enumerate}
\end{assump}
For example, $\beta(k) = \frac{1}{k}$ and $\beta(k) = \frac{1}{k\log k }$ (for $k>1$) satisfy Assumption \ref{lr}. 

Now define $N(t, s, a)$ as the number of times that state $s$ and actions $a$ and $b$ were played up to time $t$, that is 
$$
N(t, s, a) = \sum_{k=1}^t 1_{\{s, a\}}(s_k, a_k)
$$

The following assumption is needed to guarantee that all combinations of the triple $(s, a )$ are visited often.
\begin{assump}[Often updates]
	\label{ou}
	There exists a deterministic number $d>0$ such that for every $s\in S$, and $a\in A$, we have that
	$$
	\liminf_{t\rightarrow \infty} \frac{N(t, s, a)}{t} \ge d
	$$
	with probability 1.
\end{assump}

\begin{defi}
	\label{phi}
	We define the set $\Phi$ as the set of all functions $f:\R^{n\times m}\rightarrow \R$ such that
	\begin{enumerate}
		\item $f$ is Lipschitz
		\item For any $c\in \R$, $f(cQ) = cf(Q)$
		\item For any $r\in \R$ and $\widehat{Q}(s, a) = Q(s, a)+r$ for all 
		$(s, a)\in \R^{n\times m}$, we have $f(\widehat{Q}) = f(Q) + r$
	\end{enumerate}
\end{defi}

\begin{prp}
	\label{algo2}
	Suppose that $R$ is bounded and that Assumption \ref{recurrentstate}, \ref{lr}, and \ref{ou} hold. Introduce
	\begin{equation}
	\label{F}
	\mathbf{F}Q(s, a ) = \max_{\pi\in\Pi} \E \left(Q(s, \pi(s)) \right)
	\end{equation} 
	and let $f\in \Phi$ be given, where the set $\Phi$ is defined as in Definition \ref{phi}.
	Then, the asynchronous update algorithm given by 
	{
		\begin{equation}
		\label{optQ}
		\begin{aligned}
		&Q_{t+1}(s,a) = Q_{t}(s,a) + 1_{(s, a)}(s_t, a_t)\times  \\
		&~~~\times \beta (N(t, s, a))(R(s_t, a_t)
		+\mathbf{F} Q_t(s_{t+1}, a_t) - f(Q_t(s_{t}, a_t))- Q_t(s, a))
		\end{aligned}
		\end{equation}
	}
	converges to $Q^\star$ in  (\ref{H})-(\ref{Qreward2}) with probability 1.
\end{prp}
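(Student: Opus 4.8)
The plan is to read (\ref{optQ}) as an asynchronous stochastic approximation scheme and to analyze it by the ODE method, in the spirit of the relative-value-iteration $Q$-learning approach of Abounadi, Bertsekas and Borkar for average-cost problems. For a visited pair $(s,a)$ the increment in (\ref{optQ}) is $R(s,a) + \mathbf{F}Q_t(s_{t+1},a) - f(Q_t) - Q_t(s,a)$, and since $\mathbf{F}Q(s_+,a)$ does not actually depend on $a$ by (\ref{F}), it is convenient to set $\mathbf{H}Q(s,a) := R(s,a) + \sum_{s_+\in S} P(s_+\mid s,a)\,\mathbf{F}Q(s_+,a)$ and $h_Q(s) := \mathbf{F}Q(s,a)$. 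Then the conditional mean of the increment given the history is $\mathbf{H}Q_t(s,a) - f(Q_t) - Q_t(s,a)$, while the residual $M_{t+1}(s,a) = \mathbf{F}Q_t(s_{t+1},a) - \sum_{s_+}P(s_+\mid s,a)\mathbf{F}Q_t(s_+,a)$ is a martingale difference satisfying $\E(\lVert M_{t+1}\rVert^2 \mid \mathcal{F}_t) \le K(1+\lVert Q_t\rVert^2)$, because $\mathbf{F}$ is Lipschitz. The limiting ODE is therefore
\[
\dot Q = \mathbf{H}Q - f(Q)\,\one - Q =: G(Q),
\]
and the target is to show that $G$ has $Q^\star$ from (\ref{H})--(\ref{Qreward2}) as its unique globally asymptotically stable equilibrium and that the iterates remain bounded; the asynchronous stochastic approximation convergence theorem then delivers $Q_t\to Q^\star$ almost surely.

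First I would record the structural properties of $\mathbf{H}$: it is monotone ($Q\le Q'$ componentwise implies $\mathbf{H}Q\le \mathbf{H}Q'$), it commutes with constant shifts ($\mathbf{H}(Q+c\one)=\mathbf{H}Q+c\one$ for scalar $c$), hence it is a sup-norm nonexpansion, and --- by the arguments underlying the average-cost Bellman equation (\ref{H}) under Assumption \ref{recurrentstate} (equivalently, after the standard reduction to an associated stochastic shortest path problem) --- a fixed number of its iterates contracts the span seminorm $\mathrm{sp}(Q) := \max_{s,a}Q(s,a)-\min_{s,a}Q(s,a)$. Next, $Q^\star$ is an equilibrium of $G$ iff $Q^\star + f(Q^\star)\one = \mathbf{H}Q^\star$; writing $v^\star := f(Q^\star)$ and $h^\star := h_{Q^\star}$, this is precisely (\ref{Qreward2}), and substituting it into the definition of $h^\star$ gives $h^\star(s)+v^\star = \bT h^\star(s)$, i.e.\ (\ref{H}); conversely every solution of (\ref{H})--(\ref{Qreward2}) with $v^\star=f(Q^\star)$ is an equilibrium of $G$. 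Uniqueness of the equilibrium follows because $v^\star$ equals the (unique) average value of the MDP, $h^\star$ is determined up to an additive constant, and properties 2 and 3 of $\Phi$ force that constant to satisfy $f(Q^\star)=v^\star$, which pins it down.

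The two analytic ingredients are global asymptotic stability and boundedness. For the former, write $e(t)=Q(t)-Q^\star$, so that $\dot e = \big(\mathbf{H}(Q^\star+e)-\mathbf{H}Q^\star\big) - \big(f(Q^\star+e)-f(Q^\star)\big)\one - e$; the span component of $e$ is contracted both by the span-contraction property of $\mathbf{H}$ and by the $-e$ term, property 3 of $\Phi$ neutralizes the freedom along the constant direction, and property 1 (Lipschitzness) keeps the coupling between the two controlled. This gives $\mathrm{sp}(e(t))\to 0$ and then, once the span is small, $f(Q(t))\to v^\star$, hence $e(t)\to 0$; the relevant Lyapunov quantities are $\mathrm{sp}(e)$ followed by $\lvert f(Q)-v^\star\rvert$. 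For boundedness, property 2 of $\Phi$ (positive homogeneity of $f$) implies that the scaled vector field at infinity is $G_\infty(Q) = \mathbf{H}_\infty Q - f(Q)\one - Q$ with $\mathbf{H}_\infty Q(s,a) = \sum_{s_+}P(s_+\mid s,a)\mathbf{F}Q(s_+,a)$ (the bounded term $R$ vanishing under scaling), and the origin is its unique globally asymptotically stable equilibrium by the very same argument with $R\equiv 0$; the Borkar--Meyn stability criterion then yields $\sup_t\lVert Q_t\rVert_\infty<\infty$ almost surely.

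Finally I would assemble the pieces: with the iterates bounded and the martingale-noise bound above, Assumptions \ref{lr} and \ref{ou} are exactly the step-size and visitation conditions under which the asynchronous recursion (\ref{optQ}) tracks the same limiting ODE $\dot Q = G(Q)$ as its synchronous counterpart --- the relative clock speeds $N(t,s,a)/t$ are bounded below by $d>0$ by Assumption \ref{ou}, while conditions 1 and 3 of Assumption \ref{lr} control the across-component step-size ratios --- so the asynchronous convergence theorem together with the global asymptotic stability of $Q^\star$ gives $Q_t\to Q^\star$ almost surely, with $Q^\star$ identified by the equilibrium characterization. I expect the main obstacle to be the global asymptotic stability claim: showing that $G$, a sup-norm nonexpansion minus the identity twisted by an \emph{arbitrary} stabilizer $f\in\Phi$, has a \emph{globally} (not merely locally) asymptotically stable equilibrium requires fusing the span-seminorm contraction granted by Assumption \ref{recurrentstate} with all three defining properties of $\Phi$ in one Lyapunov argument; once this is done, the Borkar--Meyn hypothesis needed for boundedness is the same argument rerun at infinite scale and is comparatively routine.
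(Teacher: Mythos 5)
The paper offers no argument of its own for this proposition --- its ``proof'' is a pointer to Abounadi, Bertsekas and Borkar (2001) --- and your sketch is essentially a reconstruction of that cited proof: read (\ref{optQ}) as asynchronous stochastic approximation, identify the limiting ODE $\dot Q = \mathbf{H}Q - f(Q)\one - Q$, characterize its equilibrium as the solution of (\ref{H})--(\ref{Qreward2}) normalized by $f(Q^\star)=v^\star$, get boundedness from the Borkar--Meyn criterion applied to the scaled field (homogeneity of $f$, the bounded $R$ disappearing), and use Assumptions \ref{lr} and \ref{ou} to transfer the synchronous ODE analysis to the asynchronous recursion. That is exactly the architecture of the RVI $Q$-learning convergence proof in the reference, so on the level of strategy you and the paper are aligned.

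The one step where you depart from the cited argument is also the one that does not hold as stated: the claim that, under Assumption \ref{recurrentstate} alone, some fixed power of $\mathbf{H}$ contracts the span seminorm. Assumption \ref{recurrentstate} does not exclude periodicity, and for a periodic chain no power of the average-reward Bellman operator is a span contraction: take two states with a single action deterministically cycling $1\to 2\to 1$; then $\bT^2 h = (r(1)+r(2))\one + h$, so $\mathrm{sp}(\bT^{2k}h - \bT^{2k}g) = \mathrm{sp}(h-g)$ for all $k$, even though state $1$ is recurrent under every (the only) policy. Consequently the Lyapunov argument you build on span contraction collapses precisely in the case that your ``main obstacle'' paragraph identifies as the crux. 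Abounadi--Bertsekas--Borkar avoid this: their stability proof uses only that $\mathbf{H}$ is a sup-norm nonexpansion commuting with constant shifts, invoking the Borkar--Soumyanath result that trajectories of $\dot Q = \mathbf{H}Q - Q$ (and its $f$-normalized version) converge to the fixed-point set, together with uniqueness of the solution of (\ref{H})--(\ref{Qreward2}) up to an additive constant and the normalization forced by properties 2--3 of $\Phi$. So to make your proof complete you must either replace the span-contraction lemma by that nonexpansive-ODE convergence argument, or strengthen the hypotheses (e.g.\ an aperiodicity/scrambling condition, or carry out in detail the stochastic-shortest-path reduction you allude to, which modifies the operator rather than yielding span contraction of $\mathbf{H}$ itself); the remainder of your outline (equilibrium identification, Borkar--Meyn boundedness, asynchronous transfer under Assumptions \ref{lr} and \ref{ou}) matches the cited proof and is sound.
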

\begin{proof}
	Consult \cite{abounadi:2001}.
\end{proof}

\section{Reinforcement Learning for Constrained Markov Decision Processes}
\label{game-approach}
\subsection{Discounted Rewards}
Consider the optimization problem of finding a stationary policy $\pi$ subject to the initial state $s_0=s$ and the constraints (\ref{dis}), 
that is
\begin{equation}
\label{optproblem}
\begin{aligned}
\sup_{a_k\in A} ~~& \sum_{k=0}^{\infty}\gamma^k \E\left(r(s_k, a_k)\right)\\
\textup{s. t.}~~ 		&  
r^j(s_k, a_k)
\ge 0, ~~k\in \mathbb{N}\\ 
&  j=1, ..., J \\ 
& s_0 = s
\end{aligned}
\end{equation}
Since $\gamma>0$, optimization problem (\ref{optproblem}) is equivalent to
\begin{equation}
\label{optgamma}
\begin{aligned}
\sup_{a_k\in A} ~~& \sum_{k=0}^{\infty}\gamma^k \E\left(r(s_k, a_k)\right)\\
\textup{s. t.}~~ 		&  
\gamma^k r^j(s_k, a_k)
\ge 0, ~~k\in \mathbb{N}\\ 
&  j=1, ..., J \\ & s_0 = s
\end{aligned}
\end{equation}
The Lagrange dual of the problem gives the following equivalent formulation
\begin{equation}
\label{dual}
\begin{aligned}
\sup_{a_k\in A} \min_{\lambda_k\ge 0}~~& \sum_{k=0}^{\infty}\gamma^k \left(\E(r(s_k, a_k))+\sum_{j=1}^J \lambda_k^j r^j(s_k,a_k)\right)\\
\textup{s. t.}~~ 		
& s_0 = s
\end{aligned}
\end{equation}
Now introduce 
\begin{equation}
\label{Rlambda}
R(s,a, \lambda) = r(s, a)+\sum_{j=1}^J \lambda^j r^j(s,a)
\end{equation}
Let $Q^\star(s,a)$ be the expected reward of the agent taking action $a_0=a\in A$ from state $s_0=s$ satisfying the constraints in (\ref{optproblem}), and continuing with an optimal policy satisfying the constraints in (\ref{optproblem}) thereafter.
Then, 
we have that
\begin{equation}
\label{Q} 
\begin{aligned}
Q^\star(s,a) 	
& = \min_{\lambda\ge 0} R(s, a, \lambda) + \gamma \cdot  \max_{\pi\in \Pi} 
\E\left(Q^\star(s_+, \pi(s_+))\right)\\
\end{aligned}
\end{equation} 
and the optimal policy $\pi^\star$ is given by
$$
\pi^\star(s) = \argmax_{\pi\in \Pi}\E\left(Q^\star(s, \pi(s))\right)
$$

The next theorem states that if the optimization problem  (\ref{optproblem}) is feasible, then it's
equivalent to an unconstrained optimization problem with bounded rewards, in the sense that an optimal strategy of the agent in unconstrained problem for (\ref{optproblem}), and the value of the unconstrained problem is equal to the value of (\ref{optproblem}).

\begin{thm}
	\label{maxopt}
	Suppose that Assumption \ref{r} and \ref{rp} hold and set $C = c\cdot \gamma/(1-\gamma)$. 
	Let $Q^\star$ and $\pi^\star$ be solutions to
	\begin{equation}
	\label{bellman}
	\begin{aligned}
	Q^\star(s,a) 	&= \max\left(-C,~  \min_{\lambda\ge 0}R(s, a, \lambda)\right) + \gamma \cdot \max_{\pi\in \Pi}
	\E\left(Q^\star(s_+, \pi(s_+))\right)\\
	\pi^\star(s) &= \argmax_{\pi\in \Pi}\E\left(Q^\star(s, \pi(s))\right)
	\end{aligned}
	\end{equation}	
	If $Q^\star(s,a)>0$ for all $(s,a)\in S\times A$, then (\ref{dual}) is feasible and
	$\pi^\star$ is an optimal solution.
	Otherwise, if $Q^\star(s,a)\le 0$ for some 	
	$(s,a)\in S\times A$, then (\ref{dual}) is not feasible.  
\end{thm}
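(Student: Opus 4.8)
The plan is to observe that the modified reward $\widetilde R(s,a):=\max\!\big(-C,\ \min_{\lambda\ge 0}R(s,a,\lambda)\big)$ turns (\ref{bellman}) into an ordinary unconstrained Bellman optimality equation with a \emph{bounded} reward, and then to read feasibility and optimality of the constrained problem off the sign of the associated value function. First I would compute $\widetilde R$ in closed form. Write $A(s)=\{a\in A: r^j(s,a)\ge 0,\ j=1,\dots,J\}$ for the state-dependent feasibility set. If $a\in A(s)$, the inner minimum over $\lambda\ge 0$ is attained at $\lambda=0$ and equals $r(s,a)$, which lies in $(0,c]$ by Assumptions \ref{r} and \ref{rp}; if $a\notin A(s)$, driving the multiplier of a violated constraint to $+\infty$ sends $R(s,a,\lambda)$ to $-\infty$. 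Hence $\widetilde R(s,a)=r(s,a)$ when $a\in A(s)$ and $\widetilde R(s,a)=-C$ otherwise, so $\widetilde R\in[-C,c]$ is bounded. Therefore (\ref{bellman}) is precisely the Bellman optimality system of the unconstrained discounted MDP $(S,A,P)$ with reward $\widetilde R$; by Assumption \ref{unichain} and Proposition \ref{algo} it has a solution $(Q^\star,\pi^\star)$ in which $Q^\star$ is that MDP's optimal action-value function, $\pi^\star$ a greedy optimal stationary policy, and $V^\star(s):=\max_{a}Q^\star(s,a)$ equals its optimal value $\max_\pi\sum_{k\ge0}\gamma^k\E\big(\widetilde R(s_k,\pi(s_k))\big)$ from $s_0=s$.

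The crux is a ``the one-step penalty dominates all future reward'' estimate, and this is where the calibration $C=c\gamma/(1-\gamma)$ enters. Let $M:=\max_{s,a}Q^\star(s,a)$; from (\ref{bellman}) and $\widetilde R\le c$ we get $M\le c+\gamma M$, i.e. $M\le c/(1-\gamma)=C/\gamma$. Consequently, for every $(s,a)$ with $a\notin A(s)$, $Q^\star(s,a)=-C+\gamma\,\E_{s_+}\!\big(\max_{a'}Q^\star(s_+,a')\big)\le -C+\gamma M\le -C+C=0$. Equivalently, $Q^\star(s,a)>0$ forces $a\in A(s)$; in particular, at any state $s$ with $V^\star(s)>0$ the greedy action $\pi^\star(s)$ lies in $A(s)$ and hence violates no constraint.

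Now the two cases. Suppose the certificate holds: $Q^\star>0$ everywhere (for the argument that follows it is in fact enough that $V^\star(s)>0$ at every $s$). Then by the previous step $\pi^\star$ selects a feasible action at every state, so from any $s_0$ the trajectory it generates satisfies $r^j(s_k,a_k)\ge 0$ for all $k,j$; thus $\pi^\star$ is feasible and the constrained problem (equivalently (\ref{dual})) is feasible. Moreover, along \emph{any} feasible stationary policy $\pi$ one has $\widetilde R(s_k,a_k)=r(s_k,a_k)$, so the transformed value of $\pi$ coincides with its true discounted value $V^\pi$; applying this both to $\pi^\star$ and to an arbitrary feasible stationary $\pi$, and using that $\pi^\star$ is optimal for the transformed MDP, yields $V^{\pi^\star}(s)=V^\star(s)\ge V^\pi(s)$. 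Since a stationary policy is optimal for a constrained MDP (\cite{Altman99constrainedmarkov}), $\pi^\star$ is an optimal solution of (\ref{optproblem}) and the optimal value is $V^\star(s)$. Conversely, suppose $V^\star(s_0)=\max_aQ^\star(s_0,a)\le 0$ for some $s_0$ --- in particular whenever the whole row $Q^\star(s_0,\cdot)$ is nonpositive. Were the problem feasible, a feasible stationary $\pi$ would give, using $r_{\min}:=\min_{s,a}r(s,a)>0$ from Assumption \ref{rp}, $V^\pi(s_0)\ge r_{\min}/(1-\gamma)>0$, and since $\pi$ is feasible this equals its transformed value, forcing $V^\star(s_0)\ge V^\pi(s_0)>0$, a contradiction; hence the problem is infeasible.

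I expect the penalty calibration to be the main obstacle: one must be sure that a single constraint-violating step, costing exactly $C$, can never be recouped by discounted future rewards, and this hinges precisely on pairing the a-priori bound $M\le c/(1-\gamma)$ with $C=c\gamma/(1-\gamma)$ --- with any smaller $C$ an optimal transformed policy could profitably violate a constraint once on the way to a high-reward region, and the sign test would fail. Two subsidiary points also need care: Assumption \ref{unichain} is what makes ``feasible from $s_0$'' coincide with ``feasible from every state'', so that a state-wise sign test on $V^\star$ is a legitimate global feasibility certificate; and the last step of the optimality argument --- upgrading ``optimal among stationary policies'' to ``optimal among all policies'' for the constrained problem --- I would settle by invoking the classical sufficiency of stationary policies rather than reproving it.
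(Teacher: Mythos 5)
Your proposal is correct in substance and follows essentially the same route as the paper: calibrate the penalty so that $\max(-C,\min_{\lambda\ge 0}R(s,a,\lambda))$ equals $r(s,a)$ on constraint-satisfying pairs and $-C$ on violating ones, bound the continuation value by $c/(1-\gamma)$ so that one violation can never be recouped ($-C+\gamma\cdot c/(1-\gamma)=0$), and read feasibility off the sign of $Q^\star$. Your write-up is in fact more careful than the paper's (explicit closed form for the truncated reward, the bound $M\le c/(1-\gamma)$, the reduction to an unconstrained MDP so that Proposition \ref{algo} gives existence and greedy optimality, and the explicit use of Assumption \ref{unichain} to make state-wise feasibility a global certificate). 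The one place where you genuinely diverge is the infeasibility direction: the paper claims that $Q^\star(s,a)\le 0$ for \emph{some single} pair $(s,a)$ already implies infeasibility, and its proof silently passes from ``some $Q^\star(s,a)\le 0$'' to ``$\max_{\pi\in\Pi}\E(Q^\star(s,\pi(s)))\le 0$'', which does not follow; you instead derive infeasibility only from $V^\star(s_0)=\max_a Q^\star(s_0,a)\le 0$. Your restraint is justified: by your own estimate, any constraint-violating action $a\notin A(s)$ satisfies $Q^\star(s,a)=-C+\gamma\,\E\big(\max_{a'}Q^\star(s_+,a')\big)\le 0$ even when the problem is perfectly feasible (one simply avoids $a$ at $s$), so the theorem's second claim cannot hold verbatim whenever any infeasible state--action pair exists, and by the same token the certificate ``$Q^\star>0$ everywhere'' essentially forces every pair to be feasible. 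In short, you prove the corrected, row-wise (value-function) version of the sign test rather than the entry-wise one stated in the theorem; this is a defect of the theorem statement and of the paper's own proof rather than a gap in your argument, but it is worth stating explicitly that your converse hypothesis is $V^\star(s_0)\le 0$, not merely $Q^\star(s_0,a)\le 0$ for some $a$.
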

\begin{proof}
	See the appendix.
\end{proof}

The reason why we need Theorem \ref{maxopt} is that we want to transform the optimization problem (\ref{optproblem}) to an equivalent problem where the reward functions are bounded.

Now that we are equipped with Theorem \ref{maxopt}, we are ready to state and prove our first main result.


\begin{thm}
	\label{mainthm}
	Consider the constrained MDP problem (\ref{optproblem}) and suppose that it's feasible and that Assumption \ref{unichain}, \ref{r}, and \ref{rp} hold. Introduce $C=c\cdot \gamma/(1-\gamma)$ and
	\[
	R(s,a)  =\max\left(-C,~  \min_{\lambda\ge 0}R(s, a, \lambda)\right)
	\]
	Let $Q_k$ be given by the recursion according to  (\ref{q-learning}).  Then, $Q_k \rightarrow Q^\star$ as $k\rightarrow \infty$ where $Q^\star$ is the solution to (\ref{bellman}).
	Furthermore, the policy
	\begin{equation*}
	\label{pistar}
	\pi^\star(s) = \argmax_{\pi\in \Pi} \E\left(Q^\star(s, \pi(s) )\right)  
	\end{equation*}
	is an optimal solution to (\ref{optproblem}). 
\end{thm}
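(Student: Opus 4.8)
The plan is to obtain Theorem~\ref{mainthm} by assembling two facts already in hand: the convergence guarantee of Proposition~\ref{algo} for tabular $Q$-learning on a bounded-reward unichain MDP, and the equivalence/optimality statement of Theorem~\ref{maxopt}. The only genuinely new work inside this proof is (i) checking that the hypotheses of Proposition~\ref{algo} hold for the clipped reward $R(s,a)=\max\bigl(-C,\ \min_{\lambda\ge 0}R(s,a,\lambda)\bigr)$, and (ii) transferring feasibility from the constrained problem (\ref{optproblem}) to the saddle-point problem (\ref{dual}) so that the ``$Q^\star>0$'' branch of Theorem~\ref{maxopt} is the one that applies.

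First I would verify boundedness of $R$. By construction $R(s,a)\ge -C$ for every $(s,a)$, and taking $\lambda=0$ in the inner minimization gives $\min_{\lambda\ge 0}R(s,a,\lambda)\le R(s,a,0)=r(s,a)\le c$ by Assumption~\ref{r}; hence $-C\le R(s,a)\le c$. Moreover $\min_{\lambda\ge 0}R(s,a,\lambda)$ equals $r(s,a)$ when $r^j(s,a)\ge 0$ for all $j$ and equals $-\infty$ otherwise, so $R(s,a)=r(s,a)$ when $a$ is feasible at $s$ and $R(s,a)=-C$ when it is not. In particular $R(s,a)$ is a deterministic, bounded function computable from the current samples $r(s,a),r^1(s,a),\dots,r^J(s,a)$ alone, with no need to store these quantities across $(s,a)$ pairs, which is the source of the claimed memory efficiency. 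With $R$ bounded, the MDP unichain (Assumption~\ref{unichain}), and step sizes $\alpha_k(s,a)$ satisfying (\ref{alpha}), Proposition~\ref{algo} applies directly: the iterates (\ref{q-learning}) converge with probability $1$ to the unique fixed point of the Bellman operator with reward $R$, which is exactly the $Q^\star$ of (\ref{bellman}) once one uses $\max_{\pi\in\Pi}\E(Q(s,\pi(s)))=\max_{a\in A}Q(s,a)$. This gives $Q_k\to Q^\star$.

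Next I would establish optimality of $\pi^\star$. Since (\ref{optproblem}) is feasible by hypothesis, its equivalent reformulation (\ref{optgamma}) is feasible, and therefore so is its Lagrangian saddle-point form (\ref{dual}). Invoking the contrapositive of the last clause of Theorem~\ref{maxopt}: if $Q^\star(s,a)\le 0$ for some $(s,a)\in S\times A$, then (\ref{dual}) would be infeasible, contradicting feasibility; hence $Q^\star(s,a)>0$ for all $(s,a)$. The first clause of Theorem~\ref{maxopt} then yields that $\pi^\star(s)=\argmax_{\pi\in\Pi}\E(Q^\star(s,\pi(s)))$ is an optimal solution of (\ref{dual}), and since (\ref{dual}), (\ref{optgamma}) and (\ref{optproblem}) have the same value and the same feasible policies, $\pi^\star$ is optimal for (\ref{optproblem}). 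Combining this with the convergence in the previous paragraph completes the proof.

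The substantive difficulties are all quarantined in Theorem~\ref{maxopt} (proved in the appendix): showing that clipping the inner value at $-C=-c\gamma/(1-\gamma)$ does not change the problem — no optimal trajectory ever makes the clip active, which is where Assumption~\ref{rp} and the resulting strict positivity of the optimal value enter — and showing that an optimal strategy of the finite-action zero-sum game induces an optimal \emph{stationary}, possibly randomized, policy of the constrained MDP with no duality gap. Within the proof of Theorem~\ref{mainthm} itself the only point needing care is the feasibility transfer between (\ref{optproblem}) and (\ref{dual}); everything else is a direct citation of Proposition~\ref{algo} and Theorem~\ref{maxopt}.
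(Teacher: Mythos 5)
Your proposal is correct and follows essentially the same route as the paper, whose proof is a one-line citation of Theorem \ref{maxopt}, the boundedness of the clipped reward $R(s,a)$, and Proposition \ref{algo}; you simply fill in the details (the explicit bound $-C\le R\le c$, the identification $R(s,a)=r(s,a)$ on feasible actions and $-C$ otherwise, and the feasibility-transfer/contrapositive step that selects the $Q^\star>0$ branch of Theorem \ref{maxopt}). No discrepancy with the paper's argument.
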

\begin{proof}
	Follows from Theorem \ref{maxopt}, that $R(s,a)$ is bounded for all $(s,a)\in S\times A$, and Proposition \ref{algo}.
\end{proof}

\subsection{Expected Average Rewards}
Consider the optimization problem of finding a stationary policy $\pi$ subject to the constraints (\ref{ave}), that is
\begin{equation}
\label{optproblem2}
\begin{aligned}
\sup_{a_k\in A} ~~& \lim_{T\rightarrow \infty}\frac{1}{T}\sum_{k=0}^{T-1}  \E\left(r(s_k, a_k)\right) \\
\textup{s. t.}~~ &  
r^j(s_k, a_k)
\ge 0\\ 
&\textup{for }  j=1, ..., J
\end{aligned}
\end{equation}
Under Assumption \ref{recurrentstate}, the value of the objective function is independent of the initial state according to Proposition \ref{stateindependent}.
The Lagrange dual of the problem gives the following equivalent formulation
\begin{equation}
\label{dual2}
\begin{aligned}
\sup_{a_k\in A} \min_{\lambda_k\ge 0}~~& \lim_{T\rightarrow \infty}\frac{1}{T}\sum_{k=0}^{T-1} \left(\E(r(s_k, a_k))+\sum_{j=1}^J \lambda_k^j r^j(s_k,a_k)\right)\\
\end{aligned}
\end{equation}
Let $Q^\star(s,a)$ be the expected reward of the agent taking action $a_0=a\in A$ from state $s_0=s$ satisfying the constraints in (\ref{optproblem}), and continuing with an optimal policy satisfying the constraints in (\ref{optproblem}) thereafter.
Also, let $R(s,a, \lambda)$ be given by (\ref{Rlambda}).
Then, we have that
\begin{equation}
\label{Q2}
\begin{aligned}
h^\star(s)  + v^\star &= \max_{\pi\in\Pi} \Big(\min_{\lambda\ge 0} R(s, a, \lambda) + 
\sum_{s_+\in S} P(s_+\mid s, \pi(s))h^\star(s_+)\Big)\\
Q^\star(s,a) + v^\star &= \min_{\lambda\ge 0} R(s, a, \lambda) +  \sum_{s_+\in S} P(s_+\mid s, a)h^\star(s_+)
\end{aligned}
\end{equation} 
and the optimal policy $\pi^\star$ is given by
$$
\pi^\star(s) = \argmax_{\pi\in \Pi}\E\left(Q^\star(s, \pi(s))\right)
$$
The value of (\ref{optproblem2}) and (\ref{dual2}) is given by $v^\star$.

\begin{thm}
	\label{maxopt2}
	Suppose that Assumption \ref{r} and \ref{rp}. 
	Also,  let $Q^\star$, $h^\star$, and $\pi^\star$ be solutions to
	\begin{equation}
	\label{bellmancut}
	\begin{aligned}
	h^\star(s)  + v^\star &= \max_{\pi\in\Pi} \E\Big(\max\left(-c,~  \min_{\lambda\ge 0}R(s, a, \lambda)\right) + 
	\sum_{s_+\in S} P(s_+\mid s, \pi(s))h^\star(s_+)\Big)\\
	Q^\star(s,a) + v^\star &= \max\left(-c,~  \min_{\lambda\ge 0}R(s, a, \lambda)\right) +  \sum_{s_+\in S} P(s_+\mid s, a)h^\star(s_+)\\
	\pi^\star(s) &= \argmax_{\pi\in \Pi}\E\left(Q^\star(s, \pi(s))\right)
	\end{aligned}
	\end{equation} 
	If $Q^\star(s,a)+ v^\star >0$ for all $(s,a)\in S\times A$, then (\ref{dual2}) is feasible and
	$\pi^\star$ is an optimal solution.
	Otherwise, if $Q^\star(s,a)+ v^\star \le 0$ for some 	
	$(s,a)\in S\times A$, then (\ref{dual2}) is not feasible.  
\end{thm}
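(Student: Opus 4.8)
The plan is to mirror the argument behind Theorem~\ref{maxopt}, replacing the discounted Bellman machinery by the average‑reward machinery of Section~\ref{RL}. First I would make the inner minimization explicit: since $R(s,a,\lambda)=r(s,a)+\sum_{j}\lambda^{j}r^{j}(s,a)$ is affine and nondecreasing in each $\lambda^{j}$ exactly when $r^{j}(s,a)\ge 0$, one gets $\min_{\lambda\ge 0}R(s,a,\lambda)=r(s,a)$ when $a$ is feasible at $s$ and $\min_{\lambda\ge 0}R(s,a,\lambda)=-\infty$ otherwise. Hence $R^{\mathrm{tr}}(s,a):=\max(-c,\,\min_{\lambda\ge 0}R(s,a,\lambda))$ equals $r(s,a)\in(0,c]$ on feasible pairs (using Assumptions~\ref{r},~\ref{rp}) and equals $-c$ on infeasible pairs, so $R^{\mathrm{tr}}$ is bounded and the system~(\ref{bellmancut}) is precisely the optimality system~(\ref{H})--(\ref{Qreward2}) of the \emph{unconstrained} average‑reward MDP with reward $R^{\mathrm{tr}}$. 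By the proposition containing~(\ref{H}), under Assumption~\ref{recurrentstate} this system has a solution $(Q^{\star},h^{\star},v^{\star})$ with $v^{\star}$ the optimal average reward of that MDP, $h^{\star}(s)=\max_{a}Q^{\star}(s,a)$, and $\pi^{\star}(s)=\argmax_{a}Q^{\star}(s,a)$ an optimal stationary policy for it.

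Second, the feasible direction. Assume~(\ref{dual2}) is feasible, i.e.\ there is a stationary policy $\pi_{f}$ supported on feasible pairs. Along any trajectory of $\pi_{f}$ the truncated reward equals $r(\cdot,\cdot)\ge r_{\min}:=\min_{s,a}r(s,a)>0$, so $v^{\star}\ge r_{\min}>0$. The crux is to show $\pi^{\star}$ never selects an infeasible pair on its recurrent class, so that $\pi^{\star}$ is feasible for~(\ref{dual2}); I would argue this by contradiction from~(\ref{bellmancut}), using that an infeasible action collects exactly $-c$ while the relative gain of any continuation is controlled by the span of $h^{\star}$, which in turn is pinned down by the recurrent state $s^{*}$ of Assumption~\ref{recurrentstate}. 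Granting this, $v^{\star}$ is $\ge$ the CMDP optimum (feasible policies are admissible in the truncated MDP with the same rewards) and $\le$ it ($\pi^{\star}$ is feasible), hence equal, and $\pi^{\star}$ is CMDP‑optimal. The sign statement then follows by inspecting~(\ref{bellmancut}): on a feasible pair $Q^{\star}(s,a)+v^{\star}=r(s,a)+\sum_{s_+}P(s_+\mid s,a)h^{\star}(s_+)>0$, because $s^{*}$ is recurrent under every policy, so every state admits a feasible action and $h^{\star}$ is bounded below by the (positive) value of following feasible actions; on an infeasible pair $Q^{\star}(s,a)+v^{\star}=-c+\sum_{s_+}P(s_+\mid s,a)h^{\star}(s_+)\le 0$, because the constant $c$ is calibrated exactly to cancel the best attainable continuation level.

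Third, the infeasible direction. If~(\ref{dual2}) is not feasible, every stationary policy — in particular $\pi^{\star}$ — visits some infeasible pair with positive long‑run frequency on its recurrent class (which contains $s^{*}$). Collecting $-c$ with positive frequency while every other truncated reward is $\le c$ forces $v^{\star}$ down, and, evaluating the second line of~(\ref{bellmancut}) at such an infeasible pair together with the bound on the level of $h^{\star}$, forces $Q^{\star}(s,a)+v^{\star}\le 0$ for at least one pair; this is the contrapositive of the second claim.

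I expect the main obstacle to be the feasible direction — proving that the optimal policy of the truncated unconstrained MDP avoids infeasible actions and that its average reward equals the constrained optimum. In the discounted case the operator is a contraction, $Q^{\star}$ is the unique bounded fixed point, and a single infeasible step is ``discounted away'' by the explicit constant $C=c\gamma/(1-\gamma)$; here $\mathbf{T}$ is not a contraction, $h^{\star}$ is determined only up to an additive constant, and the penalty must be balanced against \emph{long‑run} frequencies rather than a one‑shot discounted loss. Handling this cleanly requires leaning on Assumption~\ref{recurrentstate} to normalize $h^{\star}$ (e.g.\ via the hitting time of $s^{*}$) and to bound its span, which is where the truncation constant $c$ enters; the remaining steps are routine adaptations of the discounted argument.
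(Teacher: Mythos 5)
Your overall route is the same as the paper's: make $\min_{\lambda\ge 0}R(s,a,\lambda)$ explicit (it equals $r(s,a)$ on pairs with $r^j(s,a)\ge 0$ for all $j$ and $-\infty$ otherwise, so the truncated reward is $r(s,a)$ or $-c$), view (\ref{bellmancut}) as the average-reward optimality system of the truncated unconstrained MDP, and read feasibility and optimality off the sign of $Q^\star(s,a)+v^\star$. The problem is that the decisive steps are left unproven, and you say so yourself: the claim that $\pi^\star$ never selects an infeasible pair when (\ref{dual2}) is feasible is introduced with ``Granting this,'' and the statement that ``the constant $c$ is calibrated exactly to cancel the best attainable continuation level''---i.e.\ the inequality $\sum_{s_+}P(s_+\mid s,a)h^\star(s_+)\le c$, which is what forces $Q^\star(s,a)+v^\star\le -c+c=0$ at infeasible pairs---is asserted, with only a suggestion that it might follow from normalizing $h^\star$ at $s^*$ and bounding its span via hitting times, an argument you do not carry out. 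These two facts are essentially the entire content of the theorem; everything else in your plan is routine bookkeeping, so as written the proposal has a genuine gap rather than merely omitted details. (A minor additional slip: your third paragraph establishes the contrapositive of the first claim, not of the second.)

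For comparison, the paper closes exactly these points with a short direct argument and no span or hitting-time machinery: since the truncated reward $\max\left(-c,\ \min_{\lambda\ge 0}R(s,a,\lambda)\right)$ never exceeds $c$ (Assumption \ref{r}), it asserts the bound $h^\star(s)=\max_{\pi\in\Pi}\E\left(Q^\star(s,\pi(s))\right)\le c$ for all $s$, from which any pair with $r^j(s,a)<0$ gives $Q^\star(s,a)+v^\star=-c+\sum_{s_+}P(s_+\mid s,a)h^\star(s_+)\le 0$; on feasible pairs the second line of (\ref{bellmancut}) coincides with (\ref{Q2}), the Bellman system of the constrained problem, so that with $r>0$ (Assumption \ref{rp}) one gets $Q^\star(s,a)+v^\star>0$ there, and the argmax policy $\pi^\star$ therefore confines itself to feasible pairs and is optimal for (\ref{dual2}); when no such policy exists the Lagrangian value diverges to $-\infty$, which is detected by $Q^\star(s,a)+v^\star\le 0$. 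If you want to complete your version, the effort should go into the missing bound on $h^\star$ (equivalently on the continuation term), which the paper extracts from the upper bound $c$ on the truncated reward together with the normalization of $h^\star$ built into (\ref{bellmancut}), not from ergodicity or span estimates.
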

\begin{proof}
	See the appendix.
\end{proof}

The reason why we need Theorem \ref{maxopt2} is that we want to transform the optimization problem (\ref{dual2}) to an equivalent problem where the reward functions are bounded.

Now that we are equipped with Theorem \ref{maxopt2}, we are ready to state and proof the second main result.

\begin{thm}
	\label{mainthm2}
	Consider the constrained Markov Decision Process problem   (\ref{optproblem2}) and suppose that Assumption \ref{recurrentstate}, \ref{lr}, and \ref{ou} hold. Introduce 
	\[
	R(s,a)  = \max\left(-c,~  \min_{\lambda\ge 0}R(s, a, \lambda)\right)
	\] 
	Let $Q_k$ be given by the recursion according to  (\ref{optQ}) and suppose that Assumptions \ref{lr} and \ref{ou} hold. Then, $Q_k \rightarrow Q^\star$ as $k\rightarrow \infty$ where $Q^\star$ is the solution to (\ref{bellmancut}).
	Furthermore, the policy
	\begin{equation}
	\label{pistar2}
	\pi^\star(s) = \argmax_{\pi\in \Pi} \E\left(Q^\star(s, \pi(s) )\right)  
	\end{equation}
	is a solution to (\ref{optproblem2}). 
\end{thm}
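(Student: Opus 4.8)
\medskip

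The plan is to mirror the (essentially one-line) proof of Theorem~\ref{mainthm}: I would derive Theorem~\ref{mainthm2} by chaining together Theorem~\ref{maxopt2}, the boundedness of the truncated reward, and the convergence guarantee of Proposition~\ref{algo2}. Concretely, put
\[
\bar R(s,a) = \max\left(-c,~\min_{\lambda\ge 0} R(s,a,\lambda)\right),
\]
and regard the constrained problem~(\ref{optproblem2}) as the \emph{unconstrained} expected-average-reward MDP $(S,A,P)$ driven by the reward $\bar R$.

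First I would verify that $\bar R$ is bounded. Taking $\lambda=0$ in the inner minimization gives $\min_{\lambda\ge 0} R(s,a,\lambda)\le R(s,a,0)=r(s,a)\le c$ by Assumption~\ref{r}, while the outer maximum with $-c$ forces $\bar R(s,a)\ge -c$; hence $|\bar R(s,a)|\le c$ for every $(s,a)\in S\times A$. In fact $\min_{\lambda\ge 0}R(s,a,\lambda)$ equals $r(s,a)$ when $r^j(s,a)\ge 0$ for all $j$ and $-\infty$ otherwise, so truncating at $-c$ is harmless precisely because the feasible value $r(s,a)$ never drops below $-c$. Note that, unlike the discounted case where the cut-off is $C=c\gamma/(1-\gamma)$, here it is merely $c$; the reason this level suffices is exactly the content of Theorem~\ref{maxopt2} (the object being iterated is a \emph{relative} value / bias, so the infeasibility penalty need only dominate the per-step reward magnitude rather than its discounted sum).

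Next I would fix an admissible normalizer $f\in\Phi$, for instance $f(Q)=\frac{1}{nm}\sum_{s\in S}\sum_{a\in A}Q(s,a)$, and check the three properties of Definition~\ref{phi}: it is Lipschitz, positively homogeneous, and $f(\widehat Q)=f(Q)+r$ whenever $\widehat Q(s,a)=Q(s,a)+r$. With this $f$, Proposition~\ref{algo2} applies to the recursion~(\ref{optQ}) run with reward $\bar R$ (the hypotheses ``$R$ bounded'' and Assumptions~\ref{recurrentstate}, \ref{lr}, \ref{ou} are all in force), and yields $Q_k\to Q^\star$ with probability~$1$, where $Q^\star$ together with some $v^\star,h^\star$ solves the system~(\ref{H})--(\ref{Qreward2}) for the reward $\bar R$. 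The one bookkeeping point to make explicit is that this system, written for $\bar R$, coincides with~(\ref{bellmancut}): the operator $\mathbf{T}$ with reward $\bar R$ gives $h^\star(s)+v^\star=\max_{\pi\in\Pi}\E(\bar R(s,\pi(s))+\sum_{s_+}P(s_+\mid s,\pi(s))h^\star(s_+))$, and $\mathbf{F}Q^\star(s,a)=\max_{\pi\in\Pi}\E(Q^\star(s,\pi(s)))=h^\star(s)$, reproducing both lines of~(\ref{bellmancut}); here one uses that $\E(Q(s,\pi(s)))$ is linear in the distribution $\pi(s)$, so the maxima over $\Pi$ are attained. Finally, invoking Theorem~\ref{maxopt2}: when~(\ref{optproblem2}) (equivalently~(\ref{dual2})) is feasible, $Q^\star(s,a)+v^\star>0$ for all $(s,a)$ and the greedy policy $\pi^\star(s)=\argmax_{\pi\in\Pi}\E(Q^\star(s,\pi(s)))$ of~(\ref{pistar2}) is an optimal solution to~(\ref{optproblem2}), which finishes the argument.

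I expect the only real obstacle to be this last gluing step — confirming that the limit point supplied by Proposition~\ref{algo2} is exactly the triple $(Q^\star,h^\star,v^\star)$ of~(\ref{bellmancut}) and not merely \emph{some} solution of an $f$-dependent variant. The subtlety is that the operator $\mathbf T$ of~(\ref{H}) is not a contraction, which is why the extra $-f(Q_t)$ term is inserted in~(\ref{optQ}); one must argue that, because $f$ satisfies property~3 of Definition~\ref{phi}, the solution of~(\ref{H})--(\ref{Qreward2}) is unique modulo an additive constant, so that the $Q^\star$ identified by the algorithm is the one appearing in~(\ref{bellmancut}). Everything else — boundedness of $\bar R$, admissibility of $f$, and optimality of $\pi^\star$ via Theorem~\ref{maxopt2} — is immediate from the results already quoted.
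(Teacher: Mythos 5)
Your proposal is correct and follows exactly the route of the paper, whose entire proof is the one-liner ``follows from Theorem~\ref{maxopt2}, boundedness of $R(s,a)$, and Proposition~\ref{algo2}''; you simply spell out the three ingredients (the bound $|\bar R|\le c$, the applicability of Proposition~\ref{algo2} with an admissible $f\in\Phi$, and the optimality of $\pi^\star$ via Theorem~\ref{maxopt2}). The gluing/identification subtlety you flag at the end is real but is left implicit in the paper as well, so your write-up is, if anything, more careful than the original.
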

\begin{proof}
	Follows from Theorem \ref{maxopt2}, that $R(s,a)$ is bounded for all $(s,a)\in S\times A$, and Proposition \ref{algo2}.
\end{proof}

\section{Conclusions}
\label{conc}
We considered the problem of optimization and learning for peak constrained Markov decision processes, for both discounted and expected average rewards, respectively. We transformed the original problems in order to apply reinforcement learning to a bounded unconstrained problem. The algorithm works without imposing a cost in terms of memory, where there is no need to store the learned values of $r_1(s,a), ..., r_J(s,a)$ for all $(s,a)$, saving memory of size $S\times A\times J$. 

It would be interesting to combine our algorithms with deep reinforcement learning and study the performance of the approach introduced in this paper when the value function $Q$ is modeled as a deep neural network.

\bibliography{../ref/mybib}
\newpage
\section*{Appendix}

\subsection*{Proof of Theorem \ref{maxopt}}
First note that the value of optimization problem (\ref{optproblem}) is positive if it's feasible since $r$ is positive according to Assumption \ref{rp}. Thus, the value of (\ref{dual}) is either positive if the constraints in (\ref{optproblem}) are satisfied,  or it goes to $-\infty$ if (\ref{optproblem}) is not feasible. Furthermore, the value of (\ref{dual}) (which is equal to the value of  (\ref{optproblem}) is at most $c/(1-\gamma)$ since
$r$ is bouned by $c$ according to Assumption \ref{r}. Thus, 
$$\gamma \max_{\pi\in \Pi} \E(Q^\star(s,\pi(s)))\le \gamma\cdot c/(1-\gamma) = C$$
for all $s\in S$. 
Suppose that $a$ is such that $r^j(s,a)<0$ for some $j$. Then, 
$$
\max\left(-C,~  \min_{\lambda\ge 0}R(s, a, \lambda)\right) = -C 
$$
and
$$
Q^\star(s,a) 	= \max\left(-C,~  \min_{\lambda\ge 0}R(s, a, \lambda)\right) + \gamma \cdot \max_{\pi\in \Pi}
\E\left(Q^\star(s_+, \pi(s_+))\right) \le 0
$$
On the other hand, if $a$ is such that $r^j(s,a)\ge 0$ for all $j\in [J]$, then we have that
$$
\max\left(-C,~  \min_{\lambda\ge 0}R(s, a, \lambda)\right) = \min_{\lambda\ge 0}R(s, a, \lambda)
$$
and
$$
Q^\star(s,a) 	= \min_{\lambda\ge 0}R(s, a, \lambda) + \gamma \cdot \max_{\pi\in \Pi} \E\left(Q^\star(s_+, \pi(s_+))\right)
$$
which is identical to (\ref{Q}). Thus, by choosing a policy that satisfies the constraints in (\ref{optproblem}), $r^j(s,a)\ge 0$, $Q^\star(s,a)$ will be positive. We conclude that if the policy $\pi^\star$
implies that $Q^\star(s,a)>0$, then it's optimal for (\ref{dual}). Otherwise at least one of the constraints $r^j(s,a)\ge 0$ is not satisfied and the value of 
$$\max_{\pi\in \Pi}\E(Q^\star(s,\pi(s)))=\E(Q^\star(s,\pi^\star(s)))$$ is non positive which implies that the value of (\ref{dual}) goes to $-\infty$, and so (\ref{optproblem}) is not feasible.

\subsection*{Proof of Theorem \ref{maxopt2}}
First note that the value of optimization problem (\ref{optproblem2}) is positive if it's feasible since $r$ is positive according to Assumption \ref{rp}. Thus, the value of (\ref{dual2}) is either positive if the constraints in (\ref{optproblem2}) are satisfied, or it goes to $-\infty$ if (\ref{optproblem2}) is not feasible. 
Now we have that $\min_{\lambda \ge 0} R(s, a, \lambda)\le c$ according to Assumption \ref{r}. Thus, 
$$h^\star(s) = \max_{\pi\in \Pi} \E(Q^\star(s,\pi(s))) \le c$$
for all $s\in S$. 
Suppose that $a$ is such that $r^j(s,a)<0$ for some $j$. Then, 
$$
\max\left(-c,~  \min_{\lambda\ge 0}R(s, a, \lambda)\right) = -c 
$$
Then
\begin{equation}
\label{v}
\begin{aligned}
Q^\star(s,a) + v^\star &= \max\left(-c,~  \min_{\lambda\ge 0}R(s, a, \lambda)\right) +  \sum_{s_+\in S} P(s_+\mid s, a)h^\star(s_+) \\
&= -c +  \sum_{s_+\in S} P(s_+\mid s, a)h^\star(s_+) \\
&\le -c + c\\
&= 0
\end{aligned}
\end{equation}
On the other hand, if $a$ is such that $r^j(s,a)\ge 0$ for all $j\in [J]$, then we have that
$$
\max\left(-c,~  \min_{\lambda\ge 0}R(s, a, \lambda)\right) = \min_{\lambda\ge 0}R(s, a, \lambda)
$$
and
$$
Q^\star(s,a) + v^\star = \min_{\lambda\ge 0}R(s, a, \lambda) +  \sum_{s_+\in S} P(s_+\mid s, a)h^\star(s_+) 
$$
which is identical to (\ref{Q2}). Thus, by choosing a policy that satisfies the constraints in (\ref{optproblem2}), $r^j(s,a)\ge 0$, $Q^\star(s,a) + v^\star$ will be positive. We conclude that if the policy $\pi^\star$
implies that $Q^\star(s,a)+v^\star>0$, then it's optimal for (\ref{dual2}). Otherwise at least one of the constraints $r^j(s,a)\ge 0$ is not satisfied and the value of 
$$\max_{\pi\in \Pi}\E(Q^\star(s,\pi(s)))+v^\star=\E(Q^\star(s,\pi^\star(s)))+v^\star$$ is non positive which implies that the value of (\ref{dual2}) goes to $-\infty$ , and so (\ref{optproblem2}) is not feasible.

\end{document}